\newcommand{\raisecomma}{\raisebox{2pt}{$,$}}
\newcommand{\raisedot}{\raisebox{2pt}{$.$}}
\newcommand{\correct}{\bf}
\newcommand{\deriv}{{\rm D}}
\newcommand{\nonnegint}{{\mathbb N}}
\newcommand{\pilow}{\underline{\pi}}
\newcommand{\pihigh}{\overline{\pi}}
\newcommand{\AM}{\text{AM}}
\newcommand{\GM}{\text{GM}}
\newcommand{\AGM}{\text{AGM}}
\newcommand{\half}{{\textstyle \frac{1}{2}}}
\newcommand{\PAGM}{\emph{Pi and the AGM}}
\newcommand{\Halmos}{\hspace*{\fill}\qed}
\begin{document}
\mainmatter		

\title{The  Borwein Brothers, Pi and the AGM}
\titlerunning{The Borweins, $\pi$ and the AGM}
\author{Richard P. Brent}
\authorrunning{Richard Brent}
\tocauthor{Richard Brent}
\institute{Mathematical Sciences Institute,\\
Australian National University,\\ Canberra, ACT 2600, Australia\\
and\\
CARMA, University of Newcastle,\\ 
Callaghan, NSW 2308, Australia.\\
\email{JBCC@rpbrent.com}
}

\maketitle

\begin{center}
\emph{In fond memory of Jonathan M.\ Borwein 1951--2016}
\end{center}

\begin{abstract}
We consider some of Jonathan and Peter Borweins' contributions to the
high-precision computation of $\pi$ and the elementary functions, with
particular reference to their book
{\PAGM} (Wiley, 1987).
Here ``AGM'' is the
\emph{arithmetic-geometric mean} of Gauss and Legendre.
Because the AGM converges quadratically, it can be combined with
fast multiplication algorithms 
to give fast algorithms for the {$n$-bit} computation of
$\pi$, and more generally
the elementary functions. 
These algorithms run in ``almost linear'' time
$O(M(n)\log n)$, where
$M(n)$ 
is the time for $n$-bit multiplication.
We outline some of the results and algorithms given
in {\PAGM}, and present some related (but new) results.
In particular, we improve the published error bounds for some
quadratically and quartically convergent algorithms for $\pi$,
such as the Gauss-Legendre algorithm.
We show that an iteration of 
the Borwein-Borwein quartic algorithm for $\pi$ is
equivalent to two iterations of the Gauss-Legendre quadratic algorithm
for $\pi$, in the sense that they produce exactly 
the same sequence of approximations to $\pi$ if performed using
exact arithmetic.
\keywords{arithmetic-geometric mean,
Borwein-Borwein algorithm,
Bor\-wein-Borwein quartic algorithm,
Brent-Salamin algorithm,
Chudnovsky algorithm,
computation of $\pi$,
computational complexity,
elliptic integrals,
\hbox{equivalence} of algorithms for $\pi$,
evaluation of elementary functions,
Gauss-Legendre \hbox{algorithm},
linear convergence,
quadratic convergence,
quartic convergence,
Ramanujan-Sato algorithms,
Sasaki-Kanada algorithm,
theta functions.
}
\end{abstract}

\section{Introduction}		\label{sec:intro}

Jonathan Borwein was fascinated by the constant $\pi$, and gave many
stimulating talks on this topic. The slides for most of these
talks may be found on the memorial website~\cite{Jon-talks}.
In my talk~\cite{rpb-JBCC-talk} at the 
Jonathan Borwein Commemorative Conference
I discussed the reasons for this fascination.
In a nutshell, it is that theorems about $\pi$ are often just
the tips of ``mathematical icebergs''~-- much of interest
lies hidden beneath the surface.

This paper considers some of Jonathan and Peter Borweins'
contributions to the high-precision computation of $\pi$ and the
elementary functions $\log$, $\exp$, $\arctan$, $\sin$, etc.
The material is mainly drawn from their fascinating book 
\emph{Pi and the AGM}~\cite{PAGM}. We make no attempt to review
the whole book~-- a reader interested in the complete contents should
consult one of the reviews~\cite{Andrews90,Askey88,Berndt88,Wimp88}
or, better, read the book itself.
We do not try to distinguish between the contributions of
Jonathan and his brother Peter~-- so far as we know, they contributed
equally to the book, although no doubt in different ways.

We take the opportunity to present some new results that are related to
the material in {\PAGM}.  For example, the error after a finite number of
iterations of some of the quadratically and quartically convergent
algorithms for~$\pi$ can be expressed succinctly in terms of theta
functions. Inspection of these expressions suggests that some algorithms,
previously considered different, are actually equivalent,
in the sense that they give exactly the same sequence of approximations
to $\pi$ if performed using exact arithmetic.  For example, 
one of the Borweins' quadratically convergent
algorithms~\cite[Iteration 5.2 with $r=4$]{PAGM} 
is equivalent to
the Gauss-Legendre algorithm~\cite{rpb028,rpb034,Salamin76}, 
and it follows that one step of the Borweins'
quartically convergent 
algorithm~\cite[Iteration 5.3]{PAGM} 
is equivalent to two steps of the
Gauss-Legendre algorithm. These connections between superficially different
algorithms do not seem to have been noticed before.

In \S\ref{sec:prelim} we give some necessary definitions, discuss the 
\emph{arithmetic-geometric mean}, and consider its connection with
elliptic integrals and Jacobi theta functions. We also mention
the concept of \emph{order of convergence} of an algorithm.

A brief history of quadratically convergent algorithms for $\pi$
is given in \S\ref{sec:history}.

In \S\ref{sec:faster_than_linear} 
we consider some quadratically and quartically
convergent algorithms for $\pi$, including the Gauss-Legendre
algorithm and several algorithms due to the Borweins.
In \S\ref{sec:equivalence} we show that some of the algorithms
of \S\ref{sec:faster_than_linear}, although superficially different,
are actually equivalent when performed with exact arithmetic.

Chapter 5 of {\PAGM} considers some striking 
\emph{Ramanujan-Sato}
formul{\ae} for $1/\pi$ that
give very fast (though linearly convergent) algorithms for computing~$\pi$.
The first such formul{\ae} were given by Ramanujan~\cite{Ramanujan14}.
Later authors
include Takeshi Sato, the Borwein brothers, and the Chudnovsky brothers.
See~\cite{pi_next_gen,Baruah09,BBB89} for references.
In \S\ref{sec:Ramanujan-Sato} we briefly consider some Ramanujan-Sato formul{\ae}
and the corresponding algorithms for computing~$\pi$.

One of the ``icebergs'' alluded to above is the fast computation
of elementary functions to arbitrary precision.  The constant 
$\pi = 4\arctan(1)$
is of course just a special case (the tip of the iceberg).
In \S\ref{sec:elementary-fns} we outline how fast algorithms for computing
elementary (and some other) functions can be based on the
arithmetic-geometric mean iteration.

\section{Preliminaries: Means, Elliptic Integrals 
	and Theta Functions} \label{sec:prelim}

We define the \emph{order of convergence} of a sequence.
It will be sufficient to say that a sequence
$(x_n)_{n\in\nonnegint}$ 
\emph{converges linearly} to $L$ 
(or with \emph{order of convergence}~$1$) if
\[
0 < \mu_0 = 
\liminf_{n\to\infty} \frac{\left|x_{n+1}-L\right|}{\left|x_n-L\right|} 
\le
\limsup_{n\to\infty} \frac{\left|x_{n+1}-L\right|}{\left|x_n-L\right|} 
= \mu_1 < 1.
\]
If $\mu_0 = \mu_1$ then $\mu_0$ is called the \emph{rate of convergence}.

We say that a sequence $(x_n)_{n\in\nonnegint}$ 
\emph{converges to $L$ with order $p>1$} if the sequence
converges to $L$ and there exists
\[
p = \lim_{n\to\infty} \frac{\log\left|x_{n+1}-L\right|}
	{\log\left|x_{n}-L\right|} > 1.
\]
\emph{Quadratic, cubic} and \emph{quartic} convergence
are the cases $p = 2, 3, 4$ respectively.
For example, if $x_n = 2^n \exp(-3^n)$, then $(x_n)_{n\in\nonnegint}$ 
converges cubically to zero, because\linebreak
$\log|x_{n+1}|/\log|x_n| = (-3^{n+1}+O(n))/(-3^n+O(n)) \to 3$
as $n \to \infty$.

Roughly speaking, if a sequence converges linearly to $L$
with rate $\mu$, then the
number of correct decimal digits in the approximation to $L$
increases by about $\log_{10}(1/\mu)$ per term. 
For example, if 
\begin{equation}
x_n = 2\sqrt{3}\,\sum_{j=0}^{n}\frac{(-1)^j}{(2j+1)3^j}\,\raisecomma
					\label{eq:arctan_3}
\end{equation}
then $x_n$ converges linearly to $\pi$ with about 
$\log_{10}3 \approx 0.4771$ decimal digits per term.%
\footnote{The formula~\eqref{eq:arctan_3} is listed
in Bailey's compendium~\cite{Bailey-pi-formulae},
and is attributed to 
Madhava of Sangamagramma (c.1340--c.1425).
It follows from the Taylor series for $\arctan(1/\sqrt{3})$.}
If a sequence converges
to $L$ with order $p > 1$, then the number of correct digits is
approximately multiplied by $p$ for each additional term.
For example, Newton's method for computing square roots\footnote{Attributed
to Hero of Alexandria (c.$10$--$70$ A.D.), though also called the 
\emph{Babylonian method}.}
\[x_{n+1} := \frac{1}{2}\left(x_n + \frac{S}{x_n}\right)\]
converges quadratically to $L:=\sqrt{S}$, provided that
$x_0$ and $S$ are positive. In fact, it is easy to show that
\[x_{n+1}-L \approx \frac{1}{2L}(x_n-L)^2.\]

We now consider some well-known means.
The \emph{arithmetic mean} of $a, b \in \mathbb{R}$
is
\vspace*{-5pt}
\[\AM(a,b) := \frac{a+b}{2}\,\raisecomma\]
\vspace*{-5pt}
and the \emph{geometric mean} is
\begin{equation}
\GM(a,b) := \sqrt{ab}.		\label{eq:GM}
\end{equation}
Assuming that $a$ and $b$ are positive, we have the inequality
\[
\GM(a,b) \le \AM(a,b).\]

Initially we assume that $a$, $b$ are positive real numbers.
In \S\ref{sec:elementary-fns} we permit $a$, $b$ to be complex.
To resolve the ambiguity in the square root in~\eqref{eq:GM}
we assume that\linebreak
$\Re(GM(a,b)) \ge 0$, and $\Im(GM(a,b)) \ge 0$ if
$\Re(GM(a,b))=0$.

Given two positive reals $a_0, b_0$, we can iterate the arithmetic and
geometric means by defining, for $n \ge 0$,
\begin{align*}
a_{n+1} &= \AM(a_n, b_n)\\
b_{n+1} &= \GM(a_n, b_n).
\end{align*}

The sequences $(a_n)$ and $(b_n)$ converge 
quadratically to a common limit called the
\emph{arithmetic-geometric mean} (AGM) of $a_0$ and $b_0$.
We denote it
by $\AGM(a_0,b_0)$.  

Gauss~\cite{Gauss-Werke} and Legendre~\cite{Legendre}
solved the problem of expressing $\AGM(a,b)$ in terms of
known functions.  The answer may be written as
\begin{equation}
\frac{1}{\AGM(a,b)} = \frac{2}{\pi}\int_0^{\pi/2}
	\frac{d\theta}{\sqrt{a^2\cos^2\theta + b^2\sin^2\theta}}\,\raisedot
					\label{eq:recipAGM}
\end{equation}
The right-hand-side of~\eqref{eq:recipAGM} is the product of a constant
(whose precise value will be significant later) and a 
\emph{complete elliptic integral of the first kind}.
As usual, the {complete elliptic integral of the first kind} is defined by
\begin{equation*}
K(k)   := \int_0^{\pi/2} \frac{d\theta}{\sqrt{1-k^2\sin^2\theta}}
	= \int_0^1 \frac{dt}{\sqrt{(1-t^2)(1-k^2t^2)}}\,\raisecomma
\end{equation*}
and the \emph{complete elliptic integral of the second kind}
by
\begin{equation*}
E(k)   := \int_0^{\pi/2} \sqrt{1-k^2\sin^2\theta}\,d\theta
	= \int_0^1 \frac{\sqrt{1-k^2t^2}}{\sqrt{1-t^2}}\,dt.
\end{equation*}
The variable $k$ is called the \emph{modulus},
and $k' := \sqrt{1-k^2}$ is called the \emph{complementary modulus}.
It is customary to define 
\[K'(k) := K(\sqrt{1-k^2}) = K(k')\]
and
\[E'(k) := E(\sqrt{1-k^2}) = E(k'),\]
so in the context of elliptic integrals a prime ($'$)
does \emph{not} denote differentiation.
On the occasions when we need a derivative,
we use operator notation \[\deriv_k K(k) := dK(k)/dk.\]
We remark that {\PAGM} uses the 
``dot'' notation ${\dot{K}}(k) := dK(k)/dk$,
but this is potentially ambiguous and hard to see, so we prefer to avoid it.

The moduli
$k$ and $k'$ can in general be complex, 
but unless otherwise noted we assume that
they are real and in the interval $(0,1)$.

In terms of the Gaussian hypergeometric function
\[
F(a,b;c;z) := 1 + \frac{a\cdot b}{1!\cdot c}\,z + 
		\frac{a(a+1)\cdot b(b+1)}{2!\cdot c(c+1)}\,z^2 + \cdots,
\]
we have
\begin{equation}
K(k) = \frac{\pi}{2}\,F\left(\half,\half;1;k^2\right)
	\label{eq:K_hyper}
\end{equation}
and
\begin{equation}
E(k) = \frac{\pi}{2}\,F\left(-\half,\half;1;k^2\right).
	\label{eq:E_hyper}
\end{equation}
{From}~\eqref{eq:K_hyper} and \cite[17.3.21]{AS}, 
we also have\footnote{Here and elsewhere,
$\log$ denotes the natural logarithm.}
\begin{equation}
K'(k) = \frac{2}{\pi}\log\left(\frac{4}{k}\right)K(k) - f(k),
			\label{eq:Kprime_K}
\end{equation}
where $f(k) = k^2/4 + O(k^4)$ is analytic in the disk $|k| < 1$.

Substituting $(a,b) \mapsto (1,k')$ in~\eqref{eq:recipAGM}, and recalling that
$k^2 + (k')^2 = 1$, we have
\begin{equation}
\AGM(1,k') = \frac{\pi}{2K(k)}\,\raisedot  \label{eq:AGM-Kprime}
\end{equation}
Thus, if we start from $a_0=1$, $b_0=k'\in(0,1)$ and apply the
AGM iteration, $K(k)$ can be computed from
\begin{equation}
\lim_{n\to\infty}a_n = \frac{\pi}{2K(k)}\,\raisedot	\label{eq:lim_an}
\end{equation}
$E(k)$ can be computed via the AGM at the same time as $K(k)$,
using the well-known result~\cite[(b) on pg.~15]{PAGM}
\[
\frac{E(k)}{K(k)} = 1 - \frac{k^2}{2} -
		\sum_{n=0}^\infty 2^{n}\,(a_n-a_{n+1})^2.
\]

It follows from
\eqref{eq:K_hyper} and \eqref{eq:Kprime_K} that, for small $k$,
\begin{equation}
K'(k) = \left(1+O(k^2)\right)\log\left(\frac{4}{k}\right). \label{eq:logAGM}
\end{equation}
This will be relevant in 
\S\ref{sec:elementary-fns}.
A bound on the $O(k^2)$ term is given in 
\cite[Thm.~7.2]{PAGM}.

The Gauss-Legendre algorithm depends on 
\emph{Legendre's relation}: for $0 < k < 1$,
\[E(k)K'(k) + E'(k)K(k) - K(k)K'(k) = \frac{\pi}{2}\,\raisedot\]
For a proof, see \PAGM, Sec.~1.6.

A computationally important special case,
obtained by taking $k=k'=1/\sqrt{2}$, is
\begin{equation}
\left(2E\big(1/\sqrt{2}\big) - K\big(1/\sqrt{2}\big)\right) 
 K\big(1/\sqrt{2}\big) = 
  \frac{\pi}{2}\,\raisedot
	\label{eq:midpt-case}
\end{equation}
It can be shown~\cite[Thm.~1.7]{PAGM} that the two
factors in~\eqref{eq:midpt-case} are
\[
K(1/\sqrt{2}) =
\frac{\Gamma^2\left(\textstyle\frac{1}{4}\right)}{4\pi^{1/2}}
\;\;\text{ and }\;\;
2E(1/\sqrt{2}) - K(1/\sqrt{2}) =
\frac{\Gamma^2\left(\textstyle\frac{3}{4}\right)}{\pi^{1/2}}
\,\raisedot
\]

To estimate the order of convergence and to obtain error bounds,
we consider the parameterisation of the AGM in terms 
of \emph{Jacobi theta functions}.
We need the basic theta functions of one variable,
defined for $|q| < 1$ by
\[
\theta_2(q) := \sum_{n\in\mathbb{Z}}q^{(n+1/2)^2}, \;\;
\theta_3(q) := \sum_{n\in\mathbb{Z}}q^{n^2}, \;\;
\theta_4(q) := \sum_{n\in\mathbb{Z}}(-1)^n q^{n^2}.
\]
The theta functions satisfy many identities~\cite[\S21.3]{WW}. 
In particular, we use
the following addition formul{\ae}, due to Jacobi~\cite{Jacobi}.
They are proved in~\cite[\S2.1]{PAGM}.
\begin{align}
\theta_3^2(q) &= \theta_2^2(q^2) + \theta_3^2(q^2), \label{eq:J1}\\ 
\theta_3^4(q) &= \theta_2^4(q) + \theta_4^4(q). \label{eq:J2}	
\end{align}

It is not difficult to show that 
\[
\frac{\theta_3^2(q) + \theta_4^2(q)}{2} = \theta_3^2(q^2)
\;\;\text{and}\;\;
\sqrt{\theta_3^2(q)\theta_4^2(q)} = \theta_4^2(q^2).
\]
Thus, the AGM variables $(a_n, b_n)$ can
be parameterised by
$(\theta_3^2(q^{2^n}), \theta_4^2(q^{2^n}))$ if scaled suitably.
More precisely,
if $1 = a_0 > b_0 =  \theta_4^2(q)/\theta_3^2(q) > 0$,  
where $q \in (0,1)$, then the variables $a_n$, $b_n$
appearing in the AGM iteration satisfy
\begin{equation}
a_n = \frac{\theta_3^2(q^{2^n})}{\theta_3^2(q)}\,\raisecomma\;\;
b_n =  \frac{\theta_4^2(q^{2^n})}{\theta_3^2(q)}\,\raisedot
						\label{eq:AGM_a_b}
\end{equation}
It is useful to define auxiliary variables
$c_{n+1} := a_n - a_{n+1} = (a_n - b_n)/2$.  
Using the quotient for $a_n$ and the addition formula~\eqref{eq:J1},
we see that
\begin{equation}
c_n =  \frac{\theta_2^2(q^{2^n})}{\theta_3^2(q)}
						\label{eq:AGM_c}
\end{equation}
holds for $n \ge 1$. We could use \eqref{eq:AGM_c} to define $c_0$,
but this will not be necessary.\footnote{Salamin~\cite{Salamin76}
defines $c_n$ using the relation $c_n^2 = a_n^2-b_n^2$.
This has the advantage that $c_0$ is defined naturally, and 
for $n > 0$ it is equivalent to our definition. However, it is
computationally more expensive to compute
$(a_n^2-b_n^2)^{1/2}$ than $a_n-a_{n+1}$.
}

We can write $q$ (which is called the \emph{nome}) explicitly,
in fact
\begin{equation}
q = \exp(-\pi K'(k)/K(k)).			\label{eq:nome}
\end{equation} 
This is due to Gauss/Jacobi; for a proof see~\cite[Thm.~2.3]{PAGM}.
In the important special case $k = k' = 1/\sqrt{2}$, we have $K'=K$ and
$q = e^{-\pi} = 0.0432139\ldots$

Because the AGM iteration converges quadratically, it offers the prospect
of quadratically convergent algorithms for approximating $\pi$ and,
more generally, all the elementary functions. This is the topic of
\S\ref{sec:faster_than_linear} and \S\ref{sec:elementary-fns} below. 
First we make some comments on the history of quadratically convergent
algorithms for~$\pi$.

\section{Historical Remarks}		\label{sec:history}

An algorithm for computing $\log(4/k)$, using~\eqref{eq:AGM-Kprime},
\eqref{eq:logAGM} and the AGM,
assuming that we know $\pi$ to sufficient accuracy,
was given by Salamin~\cite[pg.~71]{Salamin-HAKMEM} in 1972.
On the same page Salamin gives an algorithm for computing $\pi$,
taking $k = 4/e^n$ in~\eqref{eq:logAGM}.
With his choice $\pi \approx 2n\,\AGM(1,k)$.
However, this assumes that we know $e$, so it is not a 
``standalone'' algorithm for $\pi$ via the AGM.
Similarly, if we take $k = 4/2^n$ in~\eqref{eq:logAGM}, 
we obtain an algorithm
for computing $\pi\log 2$ (and hence $\pi$, if we know $\log 2$).

In 1975, Salamin~\cite{Salamin76} and (independently)
the present author~\cite{rpb028,rpb034}
discovered a quadratically convergent
algorithm for computing $\pi$ via the AGM \emph{without} needing to know
$e$ or $\log 2$ to high precision. 
It is known as the ``Gauss-Legendre'' algorithm (after the discoverers of
the key identities~\cite{Gauss-pi,Legendre}) or the ``Brent-Salamin''
algorithm (after the 20th century discoverers~\cite{rpb252}), 
and is about twice as fast as the earlier algorithms which assume
a knowledge of $e$ or $\log 2$.
We abbreviate the name to \emph{Algorithm GL}.
Bailey and Borwein, in 
\emph{Pi: The Next Generation}~\cite[Synopsis of paper 1]{pi_next_gen},
say ``This remarkable co-discovery arguably launched the
modern computer era of the computation of $\pi$''.%
\footnote{In~\cite[\S10]{Borwein-Berggren}, Jon Borwein says
``It [Algorithm GL]
is based on the arithmetic-geometric mean iteration (AGM) and some
other ideas due to Gauss and Legendre around 1800, although neither Gauss,
nor many after him, ever directly saw the connection to effectively
computing~$\pi$''.}

In 1984, Jon and Peter Borwein~\cite{Borwein84a}
(see also \cite[Alg.~2.1]{PAGM})
discovered another quadratically
convergent algorithm
for computing $\pi$, with convergence about as fast as Algorithm~GL.
We call this the (first) \emph{Borwein-Borwein algorithm},
or {\emph{\hbox{Algorithm} BB1}}.
Yet another quadratically convergent algorithm, which we call the
(second) \emph{Borwein-Borwein algorithm} and abbreviate as
\emph{Algorithm BB2}, 
dates from 1986~--
see~\cite{Borwein86} and \cite[Iteration 5.1]{PAGM}.
Although Algorithm BB2 appears different from Algorithm GL, we show
in \S\ref{sec:equivalence} that the two algorithms are in fact
equivalent, in the sense of producing the same sequence of approximations
to $\pi$. This surprising fact does not seem to have been noticed before.

\section[Some Superlinearly Convergent Algorithms for Pi]%
{Some Superlinearly Convergent Algorithms for $\pi$}
	\label{sec:faster_than_linear}

In this section we describe the Gauss-Legendre algorithm (GL)
and two quadratically convergent algorithms (BB1 and BB2)
due to Jon and Peter Borwein.
We also describe a $4$-th order algorithm (BB4) due to
the Borweins.

Using Legendre's relation and the formul{\ae} that we have given for $E$ and
$K$ in terms of the AGM iteration, it is not difficult to derive
Algorithm~GL. We present it in pseudo-code using the same style as
the algorithms in~\cite{rpb226}.
\vspace*{10pt}

\pagebreak[3]
\begin{samepage}
\noindent\textbf{Algorithm GL}\\
\textbf{Input}: The number of iterations $n_{max}$.\\
\textbf{Output}: A sequence of $n_{max}$ intervals containing $\pi$.
\begin{align*}
&a_0 := 1;\; b_0 := 1/\sqrt{2};\; s_0 := \textstyle\frac{1}{4}.\\
&\textbf{for } n \text{ from } 0 \text{ to } n_{max}-1 \text{ do }\\
&\hspace*{2em}a_{n+1} := (a_n+b_n)/2;\\
&\hspace*{2em}c_{n+1} := a_n - a_{n+1};\\	
&\hspace*{2em}\textbf{output } (a_{n+1}^2/s_n,\; a_{n}^2/s_n).\\
&\hspace*{2em}\textbf{if } n < n_{max}-1 \textbf{ then}\\[-5pt]
&\hspace*{4em}b_{n+1} := \sqrt{a_n b_n};\\
&\hspace*{4em}s_{n+1} := s_n - 2^n\,c_{n+1}^2.
\end{align*}
\end{samepage}
\pagebreak[3]

\noindent\textbf{Remarks}
\begin{enumerate}
\item
Subscripts on variables such as $a_n, b_n$ are given for
expository purposes. In an efficient implementation only a constant number of
real variables are needed, because $a_{n+1}$ can overwrite $a_n$
(after saving $a_n$ in a temporary variable for use in the computation
of $b_{n+1}$ and $c_{n+1}$), and similarly for $b_n$, $c_n$ and $s_n$.
\item
The purpose of the final ``if $\ldots$ then'' is simply to avoid
unnecessary computations after the final output.
Similar comments apply to the other algorithms given below.
\item
Salamin~\cite{Salamin76} notes the identity
$4a_{n+1}c_{n+1} = c_n^2$ which can be used to compute $c_{n+1}$ 
without the numerical cancellation that occurs when using
the definition $c_{n+1} = a_n - a_{n+1}$. 
However, this refinement costs time and is unnecessary, because the
terms $2^n c_{n+1}^2$ diminish rapidly and make only a minor
contribution to the overall error caused by using finite-precision
real arithmetic. To obtain an accurate result it is sufficient
to use $O(\log n_{max})$ guard digits.
\end{enumerate}

\noindent
Neglecting the effect of rounding errors,
Algorithm GL gives a sequence of lower and upper bounds on $\pi$:
\[\frac{a_{n+1}^2}{s_n} < \pi < \frac{a_{n}^2}{s_{n}}\,\raisecomma\]
and both bounds converge quadratically to $\pi$.
The lower bound is more accurate, so the algorithm is often stated with
just the lower bound $a_{n+1}^2/s_n$
(we call this variant \emph{Algorithm GL1}). 
Table \ref{tab:a} shows the approximations
to $\pi$ given by the first few iterations.  Correct digits are shown in
bold.  The quadratic convergence is evident.

\begin{table}[ht]
\vspace*{-0pt}
\centering              
\begin{tabular}{cccc}
$n$ & lower bound $a_{n+1}^2/s_n$ & & upper bound $a_n^2/s_n$ \\[2pt]\hline 
$0\;$ & 2.914213562373095048801689 & $< \pi <$ & 
	4.000000000000000000000000\\
$1\;$ & {\correct 3.14}0579250522168248311331 & $< \pi <$ &
	{\correct 3.1}87672642712108627201930\\
$2\;$ & {\correct 3.1415926}46213542282149344 & $< \pi <$ &
	{\correct 3.141}680293297653293918070\\
$3\;$ & {\correct 3.141592653589793238}279513 & $< \pi <$ &
	{\correct 3.141592653}895446496002915\\
$4\;$ & {\correct 3.141592653589793238462643} & $< \pi <$ &
	{\correct 3.14159265358979323846}6361\\
\hline
\end{tabular}
\vspace*{5pt}
\caption{Convergence of Algorithm GL}
\label{tab:a}
\end{table}

Recall that in Algorithm GL we have
$a_0 = 1$, $b_0 = 1/\sqrt{2}$, $s_0 = \frac{1}{4}$
and, for $n \ge 0$,
\[
a_{n+1} = \frac{a_n+b_n}{2}\,\raisecomma\;\; b_{n+1} = \sqrt{a_n b_n},\;\; 
c_{n+1} = a_n - a_{n+1},\;\;
  s_{n+1} = s_n - 2^n\,c_{n+1}^2\,.
\]

\pagebreak[3]
\noindent
Take $q = e^{-\pi}$, and 
write 
\begin{align}
a_\infty :=& \lim_{n\to\infty}a_n = \theta_3^{-2}(q)
	= 2\pi^{3/2}/\Gamma^2({\textstyle\frac14}) \approx 0.8472,
							\label{eq:ainf}\\
s_\infty :=& \lim_{n\to\infty}s_n
	= \theta_3^{-4}(q)/\pi
	= 4\pi^2/\Gamma^4({\textstyle\frac14}) \approx 0.2285\,.
							\label{eq:sinf}
\end{align}
Since $c_n = \theta_2^2(q^{2^n})/\theta_3^2(q)$,
we have	
\begin{equation}
s_n - s_\infty = \theta_3^{-4}(q) \sum_{m=n}^\infty 2^{m}
	\theta_2^4(q^{2^{m+1}})\,.
		\label{eq:s_n_s2}
\end{equation}
Write $a_n/a_\infty = 1+\delta_n$ and $s_n/s_\infty = 1+\varepsilon_n$.
Then 
\[\delta_n = \theta_3^2(q^{2^n}) - 1 \sim 4q^{2^n}\;\;\text{as}\;\;
	n \to \infty,\]
and \eqref{eq:sinf}~--~\eqref{eq:s_n_s2} give
\[
\varepsilon_n = \pi\sum_{m=n}^\infty
        2^m\,\theta_2^4(q^{2^{m+1}})
	\sim 2^{n+4}\pi q^{2^{n+1}}.
\]

Writing 
\[\frac{a_n^2/a_\infty^2}{s_n/s_\infty}
 = \frac{a_n^2}{\pi s_n}
 = \frac{(1+\delta_n)^2}{1+\varepsilon_n}\,\raisecomma\]
it is straightforward to obtain an upper bound on $\pi$:
\begin{equation}
0 < a_n^2/s_n - \pi < U(n) := 8\pi q^{2^n}.
					\label{eq:GS_upper1}
\end{equation}
Convergence is quadratic: if $e_n := a_n^2/s_n - \pi$, then
\[	
\lim_{n\to\infty} {e_{n+1}}/{e_n^2} = {\textstyle\frac{1}{8\pi}}\,\raisedot
\] 
Replacing $a_n$ by $a_{n+1}$ and $\delta_n$ by $\delta_{n+1}$,
we obtain a lower bound on $\pi$:
\begin{equation}
0 < \pi - \frac{a_{n+1}^2}{s_n} < L(n) :=
  (2^{n+4}\pi^2-8\pi) q^{2^{n+1}}.
					\label{eq:GS_lower1}
\end{equation}
{\PAGM} [(2.5.7) on page 48]
gives a slightly weaker lower bound which, via \eqref{eq:ainf},
may be written as 
\begin{equation}
\pi - \frac{a_{n+1}^2}{s_n} \le 
  \frac{2^{n+4}\pi^2 q^{2^{n+1}}}{a_\infty^2}\,\raisedot
					\label{eq:GS_lower2}
\end{equation}
Since $a_\infty^2 < 1$,
the bound~\eqref{eq:GS_lower2} is weaker than the
bound~\eqref{eq:GS_lower1}.
In~\eqref{eq:GS_lower1},
the factor\linebreak 
$(2^{n+4}\pi^2 - 8\pi)$
is the best possible, since an
expansion of $a_{n+1}^2/s_n$ in powers of $q$ gives
$\pi - {a_{n+1}^2}/{s_n} = (2^{n+4}\pi^2-8\pi) q^{2^{n+1}}
	\! - \,O(2^nq^{2^{n+2}})$, with the minus sign before the ``$O$'' term
informally indicating the sign of the remainder.

\begin{table}[ht]
\centering              
\begin{tabular}{c|c|c|c|c}
$\;n\;$ & $a_n^2/s_n - \pi$ & $\pi - a_{n+1}^2/s_n$
   & $\displaystyle\frac{a_n^2/s_n - \pi}{U(n)}$ 
   & $\displaystyle\frac{\pi - a_{n+1}^2/s_n}{L(n)}$\\[5pt]\hline
0 & 8.58e-1\;\; & 2.27e-1\;\; & $\;$0.790369040$\;$ & $\;$0.916996189$\;$ \\
1 & 4.61e-2\;\; & 1.01e-3\;\; & 0.981804947 & 0.999656206 \\
2 & 8.76e-5\;\; & 7.38e-9\;\; & 0.999922813 & 0.999999998 \\
3 & 3.06e-10\; & 1.83e-19\; & 0.999999999 & 1.000000000 \\
4 & 3.72e-21\; & 5.47e-41\; & 1.000000000 & 1.000000000 \\
5 & 5.50e-43\; & 2.41e-84\; & 1.000000000 & 1.000000000 \\
6 & 1.20e-86\; & 2.31e-171 & 1.000000000 & 1.000000000 \\
7 & 5.76e-174 & 1.06e-345 & 1.000000000 & 1.000000000 \\
8 & 1.32e-348 & 1.11e-694 & 1.000000000 & 1.000000000 \\
\hline
\end{tabular}
\vspace*{5pt}
\caption{Numerical values of upper and lower bounds for Algorithm~GL}
\label{tab:b}
\end{table}
In Table~\ref{tab:b},
$U(n) := 8\pi\exp(-2^n\pi)$ and
$L(n) := (2^{n+4}\pi^2 - 8\pi)\exp(-2^{n+1}\pi)$ are the bounds
given in~\eqref{eq:GS_upper1}--\eqref{eq:GS_lower1}.
It can be seen that the bounds are
very accurate for $n > 1$, as expected from our analysis.

Recall that Algorithm GL gives
approximations $a_n^2/s_n$ and $a_{n+1}^2/s_n$ 
to \hbox{$\pi = a_\infty^2/s_\infty$}.
Using the expressions for $a_n$ and $s_n$ in terms of theta functions, we
see that
\begin{equation}	
\pi = \frac{a_n^2\;\theta_3^{-4}(q^{2^n})}
	{s_n - \theta_3^{-4}(q)
	\sum_{m=n}^\infty 2^{m}\,
        {\theta_2^4(q^{2^{m+1}})}} \,\raisecomma
							\label{eq:pi_family}
\end{equation}
[or similarly with the numerator replaced by
$a_{n+1}^2\theta_3^{-4}(q^{2^{n+1}})$].
The expression~\eqref{eq:pi_family} for $\pi$ is essentially of the form 
\[\pi = 
	\frac{a_n^2 - O(q^{2^n})}{s_n - O(2^nq^{2^{n+1}})}
\;\;\left[\text{or}\;\;
	\frac{a_{n+1}^2 - O(q^{2^{n+1}})}{s_n - O(2^nq^{2^{n+1}})}
\right].
\]
This shows precisely how Algorithm~GL approximates $\pi$
and why it provides upper {[}or lower{]} bounds.

In \PAGM, Jon and Peter Borwein present a quadratically convergent
algorithm for $\pi$, based on the AGM,
but different from Algorithm~GL.  
It is Algorithm 2.1 in Chapter 2,
and was first published in~\cite{Borwein84a}. 
We call it \emph{Algorithm~BB1}.

Instead of using Legendre's relation, Algorithm BB1 uses
the identity
\[ 
K(k) \;\deriv_k K(k)\big|_{k=1/\sqrt{2}}\; = \frac{\pi}{\sqrt{2}}\,\raisecomma
\]
where $\deriv_k$ denotes differentiation with respect to $k$.

Using the connection between $K(k')$ and the AGM, the Borweins
\cite[(2.4.7)]{PAGM} prove that
\[	
\pi = \,2^{3/2}\left.\frac {(\AGM(1,k'))^3}
		   {\deriv_k\, \AGM(1,k')}\right|_{k=1/\sqrt{2}}\;.
\]
An algorithm for approximating the derivative in this formula
can be obtained by differentiating the AGM iteration
symbolically. Details are given in \cite{PAGM}. 

We now present Algorithm~BB1.
Note that the algorithm given in \cite{PAGM}
defines the upper bound $\pihigh_n := \pihigh_{n-1}(x_n+1)/(y_n+1)$ 
and omits the lower bound ${\pilow}_n$,
but ${\pilow}_n$ can be obtained from~\cite[ex.~2.5.11]{PAGM}.
We present a version that computes upper $(\pihigh_n)$
and lower $(\pilow_n)$ bounds for comparison with Algorithm~GL.
\vspace*{10pt}

\pagebreak[3]			
\begin{samepage}
\noindent\textbf{Algorithm BB1}\\ 
\textbf{Input}: The number of iterations $n_{max}$.\\
\textbf{Output}: A sequence of $n_{max}$ intervals containing $\pi$.
\begin{align*}
&x_0 := \sqrt{2};\\
&\textbf{output } ({\pilow}_0 := x_0,\; {\pihigh}_0 := x_0+2).\\
&y_1 := {x_0}^{1/2}; 
 \;x_1 := {\textstyle\frac12}(x_0^{1/2}+x_0^{-1/2});\\
&\textbf{for } n \text{ from } 1 \text{ to } n_{max}-1 \text{ do }\\
&\hspace*{2em}{\pilow}_n := \frac{2\,{\pihigh}_{n-1}}{y_n+1};\;
 {\pihigh}_n := {\pilow}_n\left(\frac{x_n+1}{2}\right);\\
&\hspace*{2em}\textbf{output } ({\pilow}_n,\; {\pihigh}_n);\\
&\hspace*{2em}\textbf{if } n < n_{max}-1 \textbf{ then} \\[-5pt]
&\hspace{4em}x_{n+1} := {\textstyle\frac12}(x_n^{1/2} + x_n^{-1/2});\;\;
y_{n+1} := \frac{y_n\,x_n^{1/2} + x_n^{-1/2}}{y_n+1}\,\raisedot
\end{align*}
\end{samepage}
It may be shown that ${\pihigh}_n$ decreases monotonically to the limit $\pi$,
and ${\pilow}_n$ increases monotonically
to $\pi$.	
Moreover, ${\pihigh}_n - {\pilow}_n$ decreases quadratically to zero.
This is illustrated in Table~\ref{tab:c}.

It is not immediately obvious that Algorithm BB1 depends on the AGM.
However, the AGM is present in \emph{Legendre form}:
if $a_0 := 1$, $b_0 := k' = 1/\sqrt{2}$,
and we perform $n$ steps of the AGM iteration
to define $a_n, b_n$, then $x_n = a_n/b_n$
and, for $n \ge 1$,  $y_n = \deriv_k b_n/\deriv_k a_n$.

\begin{table}[ht]
\centering              
\begin{tabular}{cccc}
$n$ 	& ${\pilow}_n$ 	&	& ${\pihigh}_n$ \\[3pt] \hline
0	& {\correct }1.414213562373095048801689 &$< \pi <$ 
	& {\correct 3}.414213562373095048801689 \\
1	& {\correct 3.1}19132528827772757303373 &$< \pi <$ 
	& {\correct 3.14}2606753941622600790720 \\
2	& {\correct 3.1415}48837729436193482357 &$< \pi <$ 
	& {\correct 3.1415926}60966044230497752 \\
3	& {\correct 3.141592653}436966609787790 &$< \pi <$
	& {\correct 3.141592653589793238}645774 \\
4	& {\correct 3.14159265358979323846}0785 &$< \pi <$
	& {\correct 3.141592653589793238462643} \\
\hline
\end{tabular}
\vspace*{5pt}
\caption{Convergence of Algorithm BB1}
\label{tab:c}
\end{table}

Comparing Tables~\ref{tab:a} and~\ref{tab:c}, we see that
Algorithm BB1 gives better upper bounds, but worse lower bounds,
than Algorithm GL,
for the same value of $n$ (i.e.\ same number of square roots).

As for Algorithm GL, we can express the error after $n$
iterations of Algorithm BB1 using theta functions,
and deduce the asymptotic behaviour of the error.

Consider the AGM iteration with $a_0 = 1, b_0 = k' = (1-k^2)^{1/2}$. Then 
$a_n$ and $b_n$ are functions of $k$.
In {\PAGM} 
it is shown that, for $n \ge 1$,
\begin{equation}
\pihigh_{n-1} = \left(2^{3/2}b_n^2a_n/\deriv_k a_n\right)|_{k=1/\sqrt{2}}\,.
	\label{eq:PAGM_pg47}
\end{equation}
Now $a_n$ and $b_n$ are given by~\eqref{eq:AGM_a_b} with $q = e^{-\pi}$. 
We differentiate $a_n$
with respect to~$k$, 
where $k = (1-b_0^2)^{1/2} = \theta_2^2(q)/\theta_3^2(q)$.
This gives
\begin{equation}
\deriv_k a_n = \deriv_q\!\left(\frac{\theta_3^2(q^{2^{n}})}
	{\theta_3^2(q)}\right)
	\!\Big{/}\!
	\deriv_q\!\left(\frac{\theta_2^2(q)}{\theta_3^2(q)}\right)
	\Bigg{|}_{q=e^{-\pi}}
	\,.					\label{eq:Dkan}
\end{equation}
We remark that~\eqref{eq:Dkan}
gives $\deriv_k a_0 = 0$, as expected since $a_0$ is 
independent of $k$.

Thanks to the analyticity of the theta functions in $|q| < 1$, 
there is no difficulty in showing that\footnote{Similarly, where we
exchange the order of taking derivatives and limits elsewhere
in this section, it is easy to justify.} 
\[\lim_{n\to\infty}\deriv_k a_n = \deriv_k \lim_{n\to\infty}a_n\,.\]
We denote the common value by $\deriv_k a_\infty$.
Taking the limit in \eqref{eq:PAGM_pg47},
we obtain (as also follows
from~\cite[(2.4.7)]{PAGM}): 
\begin{equation}
\deriv_k a_\infty = \frac{2^{3/2}a_\infty^3}{\pi} = 0.547486\ldots
				\label{eq:deriv_k_a}
\end{equation}
Now 
$a_n - a_\infty = \displaystyle{\sum_{m=n+1}^\infty c_m},$
and differentiating both sides with respect to $k$ gives
\begin{equation}	
\deriv_k a_n - \deriv_k a_\infty = 
\sum_{m=n+1}^\infty \deriv_q\!\left(\frac{\theta_2^2(q^{2^{m}})}
        {\theta_3^2(q)}\right)
        \!\Big{/}\!
        \deriv_q\!\left(\frac{\theta_2^2(q)}{\theta_3^2(q)}\right)
        \Bigg{|}_{q=e^{-\pi}}
        \,.					\label{eq:Dkainf}
\end{equation}
We remark that \eqref{eq:Dkainf} is analogous to \eqref{eq:s_n_s2}, which
we used in the analysis of Algorithm~GL.
Using \eqref{eq:PAGM_pg47}~-- \eqref{eq:Dkainf},
we obtain an upper bound on $\pi$ (for $n \ge 1$, $q = e^{-\pi}$)
\begin{equation}
0 < \pihigh_n - \pi < 2^{n+4}\pi^2 q^{2^{n+1}}.
				\label{eq:BB1_upper}
\end{equation}
A slightly weaker bound than~\eqref{eq:BB1_upper} is
proved in~\cite[\S2.5]{PAGM}.

\pagebreak[3]
Similarly, we can obtain a lower bound on $\pi$:
\begin{equation}
0 < \pi - \pilow_n < 4\pi q^{2^n}.
				\label{eq:BB1_lower}
\end{equation}
We omit detailed proofs of \eqref{eq:BB1_upper} and \eqref{eq:BB1_lower};
they involve straightforward but tedious expansions of power
series in~$q$.
\hbox{Experimental} evidence is provided in Table~\ref{tab:d}.

\begin{table}[ht]
\hspace*{-20pt}
\centering		
\begin{tabular}{c||c|c||c|c}
$n$ & $\pihigh_n-\pi$ 
	& $\displaystyle\frac{\pihigh_n-\pi}{2^{n+4}\pi^2q^{2^{n+1}}}$ 
	& $\pi - \pilow_n$ 
	& $\displaystyle\frac{\pi-\pilow_n}{4\pi q^{2^n}}$\\[5pt]
\hline
1 & 1.01e-3\;\; & 0.9896487063 & 2.25e-2 & 0.9570949132 \\
2 & 7.38e-9\;\; & 0.9948470082 & 4.38e-5 & 0.9998316841 \\
3 & 1.83e-19\; & 0.9974691480 & 1.53e-10 & 0.9999999988 \\
4 & 5.47e-41\; & 0.9987456847 & 1.86e-21 & 1.0000000000 \\
5 & 2.41e-84\; & 0.9993755837 & 2.75e-43 & 1.0000000000 \\
6 & 2.31e-171\; & 0.9996884727 & 6.01e-87 & 1.0000000000 \\
7 & 1.06e-345\; & 0.9998444059 & 2.88e-174\; & 1.0000000000 \\
8\; & 1.11e-694\; &0.9999222453 & 6.59e-349\; & 1.0000000000 \\
\hline
\end{tabular}
\vspace*{5pt}
\caption{Numerical values of upper and lower bounds for Algorithm BB1}
\label{tab:d}
\end{table}

Table~\ref{tab:d} gives numerical values of the
approximation errors $\pihigh_n - \pi$ and
$\pi - \pilow_n$, and the ratio of these values to the bounds
\eqref{eq:BB1_upper} and \eqref{eq:BB1_lower} respectively.
It can be seen that the bounds are very accurate (as expected from the
expressions for the errors in terms of theta functions
and the rapid convergence of the series for the theta functions).
The upper bound overestimates the error by a factor
of $1 + O(2^{-n})$.  A computation shows that we can not replace the 
bound by the function
$L(n)$ defined in~\eqref{eq:GS_lower1}, although a similar bound appears
to be valid if the constant $8\pi$ in~\eqref{eq:GS_lower1} is replaced
by a slightly smaller constant, e.g.~$7\pi$.

The bounds \eqref{eq:BB1_upper}--\eqref{eq:BB1_lower}
can be compared with the
{lower bound} $(2^{n+4}\pi^2-8\pi) q^{2^{n+1}}$ and
{upper bound} $8\pi q^{2^n}$ for Algorithm GL.
The upper bound is better for Algorithm BB1, but the lower bound is better
for Algorithm GL.  This confirms the observation above regarding the
comparison of Tables \ref{tab:a} and \ref{tab:c}.

Since it will be needed in \S\ref{sec:equivalence},
we state another quadratic algorithm, 
\emph{Algorithm BB2}, different from Algorithm BB1 but also due to Jon
and Peter Borwein (iteration $5.2$ on page 170 of \cite{PAGM} with
the parameter $r=4$).

\vspace*{10pt}
\pagebreak[3]
\begin{samepage}
\noindent\textbf{Algorithm BB2}\\
\textbf{Input}: The number of iterations $n_{max}$.\\
\textbf{Output}: A sequence of $n_{max}$ approximations to $\pi$.
\begin{align*}
&\alpha_0 := 6-4\sqrt{2};\;\; k_0 := 3-2\sqrt{2};\\
&\textbf{for } n \text{ from } 0 \text{ to } n_{max}-1 \text{ do}\\
&\hspace*{2em}\textbf{output } \widehat{\pi}_n := 1/\alpha_{n}\,;\\
&\hspace*{2em}\textbf{if } n < n_{max}-1 \textbf{ then }\\
&\hspace*{4em} k_n' := \sqrt{1-k_n^2};\;\; k_{n+1} := \frac{1-k_n'}{1+k_n'}\,;\\
&\hspace*{4em}
 \alpha_{n+1} := (1+k_{n+1})^2\alpha_n - 2^{n+2}k_{n+1}\,.
\end{align*}
\end{samepage}
\pagebreak[3]
\noindent
In Algorithm BB2, we
have $\widehat{\pi}_n \to \pi$ quadratically
\cite[pg.~170]{PAGM}. 
We remark that it would be clearer to increase (by one) the subscripts on the
variables in Algorithm BB2, so as to correspond to the usage in
Algorithm GL, which implicitly has {$k_0' = b_0/a_0 = 1/\sqrt{2}$}
and $k_1 = (1-k_0')/(1+k_0') = 3 - 2\sqrt{2}$, 
but we have kept the notation used in~\cite{PAGM}.

\pagebreak[3]
The Borwein brothers did not stop at quadratic (second-order) algorithms 
for~$\pi$. In 
Chapter~5 of \PAGM\ they gave
algorithms of orders 3, 4, 5 and 7.
Of course, these algorithms are not necessarily faster than the quadratic
algorithms, because 
we must take into account the amount of work per iteration.
For a fair comparison, we can use
Ostrowski's \emph{efficiency index}~\cite[\S3.11]{Ostrowski},
\hbox{defined} as
$\log(p)/W$, where $p > 1$ is the order of convergence and
$W$ is the work per iteration.
A justification of this measure of efficiency is given in~\cite{rpb014}.
Consider a simple example~-- if we combine three iterations of
Algorithm BB2 into one iteration of a new algorithm, then we
obtain an algorithm of order $8$, but with three times as much work
per iteration.  The efficiency index is the same in both cases,
as it should be.

We refer to \cite[Chapter~5]{PAGM} for the Borweins'
cubic, quintic and higher-order
algorithms, and consider only their
quartic algorithm, which we call \emph{Algorithm BB4}.
It is a specialisation to the case $r=4$ of the
slightly more general algorithm given in
\cite[iteration 5.3, pg.~170]{PAGM}.
The same special case is given in~\cite[Algorithm~1]{BBB89}
and has been used in extensive calculations of $\pi$, see for example
\cite{Bailey88,Kanada88}.
We have changed notation slightly ($a_n \mapsto z_n$) to avoid
conflict with the notation used in Algorithm~GL.

\vspace*{10pt}
\pagebreak[3]			
\begin{samepage}
\noindent\textbf{Algorithm BB4}\\
\textbf{Input}: The number of iterations $n_{max}$.\\
\textbf{Output}: A sequence of $n_{max}$ approximations to $\pi$.
\begin{align*}
&y_0 := \sqrt{2}-1;\;\; z_0 := 2y_0^2;\\
&\textbf{for } n \text{ from } 0 \text{ to } n_{max}-1 \text{ do}\\
&\hspace*{2em}\textbf{output } \pi_n := 1/z_n\,;\\
&\hspace*{2em}\textbf{if } n < n_{max}-1 \textbf{ then }\\
&\hspace{4em}y_{n+1} := \frac{1-(1-y_n^4)^{1/4}}
		  {1+(1-y_n^4)^{1/4}}\;;\\
&\hspace{4em}z_{n+1} := 
		z_n(1+y_{n+1})^4 - 2^{2n+3}y_{n+1}(1+y_{n+1}+y_{n+1}^2).
\end{align*}
\end{samepage}

\pagebreak[3]
\noindent
In Algorithm BB4, $\pi_n$ converges quartically to $\pi$.
A sharp error bound is
\begin{equation}
0 < \pi - \pi_n < \pi^2\,4^{n+2}\exp(-2\pi\, 4^n).
				\label{eq:BB4_bound}
\end{equation}
This improves by a factor of two on the error bound given
in \cite[top of pg.~171]{PAGM}.
We defer the proof until \S\ref{sec:equivalence}.

Table \ref{tab:e} shows the error $\pi-\pi_n$ after $n$ iterations of the
Borwein quartic algorithm, and the ratio 
of the error $\pi-\pi_n$ to the upper bound~\eqref{eq:BB4_bound}.

\begin{table}[ht]
\centering              
\begin{tabular}{c|l|c}
$\;n\;$ & \hspace*{3em}$\pi-\pi_n$ & 
 $\displaystyle\frac{\pi-\pi_n}{\text{bound \eqref{eq:BB4_bound}}}$\\[5pt]
\hline
0 & \, 2.273790912e-1 & \, 0.7710517124 \\
1 & \, 7.376250956e-9 & \, 0.9602112619 \\
2 & \, 5.472109145e-41 & \, 0.9900528160 \\
3 & \, 2.308580715e-171 & \, 0.9975132040 \\
4 & \, 1.110954934e-694 & \, 0.9993783010 \\
5 & \, 9.244416653e-2790 & \, 0.9998445753 \\
6 & \, 6.913088685e-11172 & \, 0.9999611438 \\
7 & \, 3.376546688e-44702 & \, 0.9999902860 \\
8 & \, 3.002256862e-178825 & \, 0.9999975715 \\
\hline
\end{tabular}
\vspace*{5pt}
\caption{Approximation error in Algorithm BB4}
\label{tab:e}
\end{table}

At this point the reader may well ask ``which of Algorithms GL, BB1, BB2
and BB4 is the fastest?''. The answer seems to depend on implementation
details.  All four algorithms involve the same number of square roots to
obtain comparable accuracy (counting a fourth root in Algorithm BB4 as
equivalent to two square roots, which is not necessarily correct%
\footnote{For example, one might compute $x^{1/4}$ using two
\emph{inverse} square roots, i.e. $(x^{-1/2})^{-1/2}$, which is possibly 
faster than two square roots, i.e. $(x^{1/2})^{1/2}$, 
see~\cite[\S4.2.3]{rpb226}.}).
Algorithm~GL has the advantage that high-precision divisions are only
required when generating the output (so the early divisions can be skipped
if intermediate output is not required).
The other three algorithms require at least one division per iteration.
Borwein, Borwein and Bailey~\cite[pg.~202]{BBB89} say 
``[Algorithm BB4] is arguably the most efficient algorithm
currently known for the extended precision calculation of $\pi$'',
and the times given in Bailey's paper~\cite[pg.~289]{Bailey88} confirm this
(28 hours for Algorithm BB4 versus 40 hours for Algorithm BB1).
However, Kanada~\cite{Kanada88}, who extended Bailey's computation,
reached the opposite conclusion. His computation took 5 hours 57 minutes
with Algorithm~GL, and 7 hours 30 minutes with Algorithm~BB4 (which was used
for verification).
\pagebreak[3]

\section[Equivalence of Some Algorithms for Pi]%
{Equivalence of Some Algorithms for $\pi$}
	\label{sec:equivalence}

In the following, 
\emph{doubling} an algorithm $A$ means to construct an algorithm $A^2$ 
that outputs
$(x_0,x_2,x_4,\ldots)$ if algorithm $A$ outputs
$(x_0,x_1,x_2,\ldots)$.
Replacing $n$ by $2n$ in~\eqref{eq:GS_lower1} and retaining only the most
significant term, we see that an error bound for Algorithm GL1 doubled is
\[0 < \pi - a_{2n+1}^2/s_{2n} < \pi^2\,4^{n+2}\exp(-2\pi\,4^n).\]
It is suggestive that the right-hand side 
is the same as in the error bound~\eqref{eq:BB4_bound}
for the Borwein quartic algorithm after $n$ iterations.

On closer inspection we find
that the two algorithms (GL1 doubled
and BB4) are \emph{equivalent}, in the sense that they give
\emph{exactly} the same sequence of approximations to $\pi$.
Symbolically,
\begin{equation}
\pi_n = a_{2n+1}^2/s_{2n},		\label{eq:GL_BB4}
\end{equation}
where $a_n, s_n$ are as in Algorithm GL, and
$\pi_n$ is as in Algorithm BB4.
This observation appears to be new~-- it is not stated
explicitly in {\PAGM} or elsewhere, so far
as we know.\footnote{For example, the equivalence is not mentioned in
\cite{Bailey88}, \cite{BBB89}, 
\cite{Guillera08}, \cite{Guillera16} or \cite{Kanada88}.}

Before proving the result, we give some empirical evidence for it,
since that is how the result was discovered~-- in the spirit of
``Experimental Mathematics'', as beloved by Jon Borwein.
In Table~\ref{tab:f},
$n+1$ is the number of square roots, and
the second column is the error in the approximation given by Algorithm GL1
after $n$ iterations, or by the Algorithm BB4 after $n/2$ iterations ($n$
even). The error is the same for both algorithms (verified to $1000$ decimal
digits, not all shown).

\begin{table}[ht]
\centering
\begin{tabular}{c|l}
$n$ & \hspace*{5pt} $\pi - a_{2n+1}^2/s_{2n}$ (for Algorithm GL1) or
 $\pi - \pi_n$ (for Algorithm BB4)\\[2pt]
\hline
0 &\;\; $2.2737909121669818966095465906980480562749752399816\text{e-1}$\\
2 &\;\; $7.3762509563132989512968071098827321760295030264154\text{e-9}$\\
4 &\;\; $5.4721091456899418327485331789641785565936917028248\text{e-41}$\\
6 &\;\; $2.3085807149343902668213207343869568303303472423996\text{e-171}$\\
8 &\;\; $1.1109549335576998257002904117322306941479378545140\text{e-694}$\\
\hline
\end{tabular}
\vspace*{5pt}
\caption{Approximation error for Algorithms GL1 doubled and BB4}
\label{tab:f}
\end{table}

\pagebreak[3]
\noindent 
Using the definitions of the two algorithms,
equality for the first line of the table ($n=0$) follows from
\begin{align*}
a_1^2/s_0 = \pi_0 &= {\textstyle\frac{3}{2}}+\sqrt{2}
	= \pi - 0.227\ldots
\end{align*}
For the second line ($n=2$) we have, with $t := 2^{-1/4}$,
\[
a_3 = \frac{(t^2+2t+1+2\sqrt{2t^3+2t})}{8}
\;\text{ and }\;
s_2 = \frac{8t^3-4t^2+8t-5}{16}\,\raisecomma
\]
\vspace*{-5pt}
so
\begin{equation}
\frac{a_3^2}{s_2} = \frac{(t^2+2t+1+2\sqrt{2t^3+2t})^2}
		{4(8t^3-4t^2+8t-5)}	\,\raisedot	\label{eq:a3s2}
\end{equation}
Also, from the definition of Algorithm BB4 we find, with
\[
y_1 = \frac{1-(12\sqrt{2}-16)^{1/4}}{1+(12\sqrt{2}-16)^{1/4}}\,\raisecomma
\]
\vspace*{-5pt}
that
\begin{equation}
\pi_1 = \frac{1}{(6-4\sqrt{2})(1+y_1)^4-8y_1-8y_1^2-8y_1^3}
		\,\raisedot				\label{eq:pi1}
\end{equation}
It is not obvious that the algebraic numbers given by
\eqref{eq:a3s2} and \eqref{eq:pi1}
are identical, but it can be verified that they both
have minimal polynomial
\begin{align*}
P(x) :=& 1 - 1635840576 x - 343853312 x^2 + 60576043008 x^3\\[-5pt]
& \;\; + 1865242664960 x^4 - 16779556159488 x^5 + 37529045696512 x^6\\[-5pt]
& \;\;\;\; - 29726424956928 x^7 + 6181548457984 x^8.
\end{align*}
Using Sturm sequences~\cite{Sturm}, it may be shown that
$P(x)$ has two real roots, one in the interval 
$[0,1]$, and the other in $[3,4]$.
A numerical computation shows that $|a_3^2/s_2 - \pi_1| < 1$,
but both $a_3^2/s_2$ and $\pi_1$ are real roots of $P(x)$,
so they must be equal.

Clearly this ``brute force'' approach does not generalise.
To prove the equivalence of Algorithms BB4 and GL1,
we first consider the equivalence of Algorithms BB2 and  GL1.

\begin{theorem}				\label{thm:GL_BB2}
Algorithm BB2 is equivalent to Algorithm GL1,
in the sense that
\[\widehat{\pi}_n = a_{n+1}^2/s_n,\]
where $\widehat{\pi}_n = 1/\alpha_n$ is as in Algorithm BB2,
and $a_{n+1}, s_n$ are as in Algorithm GL.
\end{theorem}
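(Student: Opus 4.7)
The plan is to prove by joint induction on $n$ that $k_n = c_{n+1}/a_{n+1}$ and $\alpha_n = s_n/a_{n+1}^2$, where $a_n, b_n, c_n, s_n$ are the Algorithm~GL variables and $k_n, \alpha_n$ the Algorithm~BB2 variables. The theorem follows by inverting the second identity, since $\widehat{\pi}_n = 1/\alpha_n$. For the base case $n=0$, direct computation gives $a_1 = (2+\sqrt{2})/4$ and $c_1 = (2-\sqrt{2})/4$, hence $c_1/a_1 = (2-\sqrt{2})/(2+\sqrt{2}) = 3 - 2\sqrt{2} = k_0$ and $s_0/a_1^2 = 4/(6+4\sqrt{2}) = 6 - 4\sqrt{2} = \alpha_0$. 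For the induction step on $k_n$, observe that $a_{n+1}^2 - c_{n+1}^2 = \big((a_n+b_n)^2 - (a_n-b_n)^2\big)/4 = a_n b_n = b_{n+1}^2$, so the hypothesis $k_n = c_{n+1}/a_{n+1}$ forces $k_n' = b_{n+1}/a_{n+1}$; the BB2 Landen update then collapses to $(1-k_n')/(1+k_n') = (a_{n+1}-b_{n+1})/(a_{n+1}+b_{n+1}) = c_{n+2}/a_{n+2}$, as required.

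The substantive step is the induction on $\alpha_n$. Using the telescoping identity $a_{n+1} = a_{n+2} + c_{n+2}$, we get $(1+k_{n+1})^2 = (a_{n+1}/a_{n+2})^2$. Substituting this together with the inductive hypothesis $\alpha_n = s_n/a_{n+1}^2$ and the already-established $k_{n+1} = c_{n+2}/a_{n+2}$ into the BB2 recurrence, and then clearing the common factor $a_{n+2}^2$, the desired equality $\alpha_{n+1} = s_{n+1}/a_{n+2}^2$ reduces to
\[ s_{n+1} = s_n - 2^{n+2}\, a_{n+2}\, c_{n+2}. \]
The Algorithm~GL update reads $s_{n+1} = s_n - 2^n c_{n+1}^2$, so the two rules agree precisely when $4 a_{n+2} c_{n+2} = c_{n+1}^2$. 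This is exactly Salamin's identity recorded in Remark~3 following Algorithm~GL, and invoking it closes the induction.

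The main obstacle is spotting that Salamin's identity is the bridge reconciling the superficially different BB2 and GL recurrences; once this catalyst is identified, every other step reduces to routine algebraic manipulation. The rest of the correspondence is foreshadowed by the parenthetical remark after Algorithm~BB2 identifying $k_1 = (1-k_0')/(1+k_0') = 3 - 2\sqrt{2}$, which already signals that the BB2 index $n$ is offset by one from the natural AGM index in Algorithm~GL.
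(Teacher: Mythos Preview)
Your proof is correct and follows essentially the same route as the paper: both arguments establish $k_n = c_{n+1}/a_{n+1}$, rewrite the BB2 recurrence in terms of $\beta_n := a_{n+1}^2\alpha_n$, and then identify it with the GL recurrence for $s_n$ via Salamin's identity $4a_{n+2}c_{n+2}=c_{n+1}^2$. The one noteworthy difference is that the paper obtains $k_n = c_{n+1}/a_{n+1}$ by invoking the theta-function parameterisation of the AGM (both sides equal $\theta_2^2(q^{2^{n+1}})/\theta_3^2(q^{2^{n+1}})$), whereas you prove it by an elementary induction using $a_{n+1}^2 - c_{n+1}^2 = b_{n+1}^2$ and the Landen update; this makes your argument entirely self-contained and independent of the theta-function machinery developed earlier in the paper.
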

\begin{proof}
In the proof we take $n \ge 0$, $q = e^{-\pi}$, and
assume that $a_n, b_n, c_{n+1}, s_n$ are defined as in
Algorithm GL, and $k_n, \alpha_n, \widehat{\pi}_n$ are as in Algorithm BB2.

Algorithm GL implements the recurrence
\begin{equation}
s_{n+1} = s_n - 2^n c_{n+1}^2,				\label{eq:GL_rec}
\end{equation}
whereas Algorithm BB2 implements the recurrence
\begin{equation}
\alpha_{n+1} = (1+k_{n+1})^2\alpha_n - 2^{n+2}k_{n+1}.	\label{eq:BB2_rec}
\end{equation}
We show that the recurrences \eqref{eq:GL_rec}--\eqref{eq:BB2_rec}
are related.
Noting the remark on subscripts following the statement of
Algorithm BB2, we see that
\hbox{$k_n = {c_{n+1}/}{a_{n+1}}$},
since both sides equal
$\theta_2^2(q^{2^{n+1}})/\theta_3^2(q^{2^{n+1}})$.
Thus
\begin{equation}
1+k_{n+1} = a_{n+1}/a_{n+2}. 					\label{eq:kn_an}
\end{equation}

Define $\beta_n := a_{n+1}^2\alpha_n$ and $\gamma_n := a_{n+2}^2 k_{n+1}$.
Substituting \eqref{eq:kn_an} into~\eqref{eq:BB2_rec} and
clearing the fractions gives
\begin{equation}
\beta_{n+1} = \beta_n - 2^{n+2}\gamma_n.		\label{eq:BB2_rec2}
\end{equation}
Now
\[
4\gamma_n 	= 4a_{n+2}^2 k_{n+1} 
		= 4a_{n+2} c_{n+2}
		= \theta_2^4(q^{2^{n+2}})/\theta_3^4(q)
		= c_{n+1}^2,
\]
so~\eqref{eq:BB2_rec2} is equivalent to
\begin{equation}
\beta_{n+1} = \beta_n - 2^{n}c_{n+1}^2.                \label{eq:BB2_rec3}
\end{equation}
This is essentially the same recurrence as~\eqref{eq:GL_rec}.
Also, $s_0 = 1/4$ and
$\beta_0 = a_1^2\alpha_0 = 1/4$, so $s_0 = \beta_0$.
It follows that $s_n = \beta_n$ for all $n \ge 0$.
Thus $s_n = a_{n+1}^2\alpha_n$, and
\[\widehat{\pi}_n = 1/\alpha_n = a_{n+1}^2/s_n,\] 
which completes the proof.
\Halmos
\end{proof}

\pagebreak[3]

\begin{corollary}				\label{cor:GL_BB4}
Algorithm BB4 is equivalent to Algorithm GL1 doubled, in the
sense that \[\pi_n = a_{2n+1}^2/s_{2n},\] where $\pi_n$ is as in Algorithm
BB4, and $a_{n}, s_{n}$ are as in Algorithm GL.
\end{corollary}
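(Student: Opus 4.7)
The natural plan is to factor the corollary through Theorem \ref{thm:GL_BB2}: once we know $\widehat{\pi}_n = a_{n+1}^2/s_n$, the identity $\pi_n = a_{2n+1}^2/s_{2n}$ reduces to the purely algebraic claim that one iteration of Algorithm BB4 reproduces two iterations of Algorithm BB2, i.e.\ $\pi_n = \widehat{\pi}_{2n}$. So the task becomes: identify Algorithm BB4 as the ``doubled'' version of Algorithm BB2 on the nose (not merely asymptotically).

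The coupling I would conjecture is $z_n = \alpha_{2n}$ together with $k_{2n} = y_n^2$. The base case is immediate from the initial data: $y_0 = \sqrt{2}-1$ gives $y_0^2 = 3-2\sqrt{2} = k_0$, and $z_0 = 2y_0^2 = 6-4\sqrt{2} = \alpha_0$. For the inductive step I would introduce the intermediate quantity $u := (1-y_n^4)^{1/4}$, so that BB4 outputs $y_{n+1} = (1-u)/(1+u)$. Using $k_{2n} = y_n^2$ one finds $k_{2n}' = u^2$, hence $k_{2n+1} = (1-u^2)/(1+u^2)$, then $k_{2n+1}' = 2u/(1+u^2)$, and finally $k_{2n+2} = (1-u)^2/(1+u)^2 = y_{n+1}^2$, preserving the modulus relation. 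This also exposes $1+k_{2n+1} = 2/(1+u^2)$ and $1+k_{2n+2} = 1+y_{n+1}^2$, which are the only ingredients I need for the $\alpha$-recurrence.

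Composing the two BB2 updates gives
\[
\alpha_{2n+2} = \bigl[(1+k_{2n+1})(1+k_{2n+2})\bigr]^2 \alpha_{2n}
 - 2^{2n+2}(1+k_{2n+2})^2 k_{2n+1} - 2^{2n+3}k_{2n+2},
\]
so I must verify two identities in $u$ (equivalently in $y_{n+1}$):
\[
(1+k_{2n+1})(1+k_{2n+2}) = (1+y_{n+1})^2,
\]
\[
(1+k_{2n+2})^2 k_{2n+1} + 2 k_{2n+2} = 2\,y_{n+1}(1+y_{n+1}+y_{n+1}^2).
\]
Both are straightforward rational manipulations in $u$ after substituting the expressions above; for the second, the key simplification is $1+y_{n+1}+y_{n+1}^2 = (3+u^2)/(1+u)^2$, which matches the expansion of the left-hand side. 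Granting these, the compounded recurrence coincides with the BB4 update $z_{n+1} = z_n(1+y_{n+1})^4 - 2^{2n+3}y_{n+1}(1+y_{n+1}+y_{n+1}^2)$, closing the induction and giving $\widehat{\pi}_{2n} = 1/\alpha_{2n} = 1/z_n = \pi_n$.

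The main obstacle is merely clerical: the ``doubled'' BB2 recurrence is a rational function in $u$ of moderately high degree, so some care is needed to organise the algebra compactly, especially the cross-term $2^{2n+2}(1+k_{2n+2})^2 k_{2n+1}$, which naively has denominator $(1+u)^4(1+u^2)$. Factoring out $(1-u)/(1+u)^3$ at the right moment makes everything collapse cleanly. Once the equivalence $\pi_n = \widehat{\pi}_{2n}$ is established, Theorem \ref{thm:GL_BB2} applied at index $2n$ finishes the corollary, and as a bonus the sharp error bound \eqref{eq:BB4_bound} follows by substituting $n\mapsto 2n$ in \eqref{eq:GS_lower1}.
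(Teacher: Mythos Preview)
Your proposal is correct and follows the same route as the paper: reduce to Theorem~\ref{thm:GL_BB2} via the identity $\pi_n = \widehat{\pi}_{2n}$. The only difference is that the paper simply cites this doubling identity from \cite[pg.~171]{PAGM} (where it is noted that Algorithm~BB4 was obtained precisely by doubling Algorithm~BB2), whereas you supply the explicit algebraic verification yourself.
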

\begin{proof}
The Borwein brothers noted~\cite[pg.\ 171]{PAGM} that Algorithm BB4
is equivalent to \hbox{Algorithm} BB2 doubled,%
\footnote{In fact, this is how Algorithm BB4 was discovered, by doubling
Algorithm BB2 and then making some straightforward program optimisations.
}
i.e.\ $\pi_n = \widehat{\pi}_{2n}$.
Thus, the result follows from Theorem~\ref{thm:GL_BB2}.
\Halmos
\end{proof}

\begin{corollary}				\label{cor:BB4_bound}
For Algorithm BB4, the error bound~\eqref{eq:BB4_bound} holds.
\end{corollary}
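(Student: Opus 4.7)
The plan is to obtain the error bound as an immediate consequence of Corollary~\ref{cor:GL_BB4} combined with the lower-bound analysis of Algorithm GL that was already carried out in \S\ref{sec:faster_than_linear}. Since Corollary~\ref{cor:GL_BB4} asserts $\pi_n = a_{2n+1}^2/s_{2n}$, bounding $\pi - \pi_n$ reduces to bounding $\pi - a_{2n+1}^2/s_{2n}$, for which the bound \eqref{eq:GS_lower1} (with $n$ replaced by $2n$) is already available.

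Concretely, I would first write, using Corollary~\ref{cor:GL_BB4},
\[
0 < \pi - \pi_n = \pi - \frac{a_{2n+1}^2}{s_{2n}}.
\]
Then I would invoke \eqref{eq:GS_lower1} at index $2n$ with $q = e^{-\pi}$, giving
\[
0 < \pi - \frac{a_{2n+1}^2}{s_{2n}} < (2^{2n+4}\pi^2 - 8\pi)\,e^{-\pi \cdot 2^{2n+1}}
   = (16\cdot 4^n \pi^2 - 8\pi)\exp(-2\pi\,4^n).
\]
The final step is to observe that trivially
\[
16\cdot 4^n \pi^2 - 8\pi < 16\cdot 4^n \pi^2 = \pi^2\cdot 4^{n+2},
\]
which yields $0 < \pi - \pi_n < \pi^2\,4^{n+2}\exp(-2\pi\,4^n)$, establishing \eqref{eq:BB4_bound}.

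There is no real obstacle in this argument, since all the substantial work has already been done: the theta-function parameterisation was used to derive the sharp bound \eqref{eq:GS_lower1} for Algorithm GL, and Theorem~\ref{thm:GL_BB2} together with the Borweins' observation that Algorithm BB4 equals Algorithm BB2 doubled supplies the identity $\pi_n = a_{2n+1}^2/s_{2n}$. The only minor point worth noting is the slight loss of sharpness incurred by replacing $2^{2n+4}\pi^2 - 8\pi$ with $2^{2n+4}\pi^2$; this loss is $O(4^{-n})$ relative to the dominant term and explains the factor-of-two improvement over the bound in \cite[top of pg.~171]{PAGM} advertised just before \eqref{eq:BB4_bound}, as predicted by the empirical ratios in Table~\ref{tab:e}, which approach $1$.
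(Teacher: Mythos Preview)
Your proof is correct and follows exactly the same approach as the paper: invoke Corollary~\ref{cor:GL_BB4} to identify $\pi_n$ with $a_{2n+1}^2/s_{2n}$, then apply the Algorithm~GL lower bound~\eqref{eq:GS_lower1} at index $2n$ and weaken $(2^{2n+4}\pi^2 - 8\pi)$ to $4^{n+2}\pi^2$. The paper's proof is terser but identical in content. One minor remark: your closing sentence linking the $O(4^{-n})$ loss of sharpness to the factor-of-two improvement over the {\PAGM} bound is a bit garbled (the improvement comes from starting with the sharper constant in~\eqref{eq:GS_lower1}, not from the rounding step), but this is commentary and does not affect the proof.
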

\begin{proof}
In view of Corollary~\ref{cor:GL_BB4},
the error bound~\eqref{eq:BB4_bound} follows 
from~\eqref{eq:GL_BB4} and the error bound~\eqref{eq:GS_lower1} for
Algorithm~GL.
\Halmos
\end{proof}

\section[Some Fast (but Linear) Algorithms for Pi]%
{Some Fast (but Linear) Algorithms for $\pi$}
	\label{sec:Ramanujan-Sato}
	
Let $(x)_n := x(x+1)\cdots(x+n-1)$ denote the 
\emph{ascending factorial}.
In Chapter 5 of \PAGM, 
Jon and Peter Borwein discuss \emph{Ramanujan-Sato} series such as
\[	
\frac{1}{\pi} = 2^{3/2}\sum_{n=0}^\infty
 \frac{(\frac14)_n (\frac12)_n (\frac34)_n}{(n!)^3}\,
  \frac{(1103+26390n)}{99^{4n+2}}\,\raisedot
\]
This is linearly convergent, with rate $1/99^4$,
so adds nearly eight decimal digits per term,
since $99^4 \approx 10^8$.

A more extreme example is the Chudnovsky series~\cite{Chudnovsky}
\begin{equation} 
\frac{1}{\pi} = 12\sum_{n=0}^\infty
 (-1)^n\, \frac{(6n)!\,(13591409+545140134n)}{(3n)!\,(n!)^3\, 
 640320^{3n+3/2}}
				\label{eq:Chudnovsky_640320}
 \,\raisecomma
\end{equation}
which adds about $14$ decimal digits per term.

Although such series converge only linearly, their convergence is so fast
that they are competitive with higher-order algorithms such as 
Algorithm GL
for computing highly accurate
approximations to $\pi$. Which
algorithm is the fastest in practice depends on
details of the implementation and on technological factors such
as memory sizes and access times. 

\section{Fast Algorithms for the Elementary Functions}
		\label{sec:elementary-fns}

In this section, we
consider the \emph{bit-complexity} of algorithms.
The bit-complexity of an algorithm is the (worst
case) number of single-bit operations required to complete the algorithm.
For a fuller discussion, see Chapter~$6$ of \PAGM. We are interested
in asymptotic results, so are usually willing to ignore constant factors.

If all operations are performed to (approximately) the same precision, then
it makes sense to count \emph{operations} such as multiplications, divisions
and square roots. Algorithms based on the AGM fall into this category. 

If the precision of the operations varies widely, then
bit-complexity is a more sensible measure of complexity. 
An example is
Newton's method, which is self-correcting, so can be started with low
precision. Another example is summing a series with rational terms, such as
$e = \sum_{k=0}^\infty {1}/{k!}$.

The bit-complexity of multiplying two $n$-bit numbers to obtain a $2n$-bit
product is denoted by $M(n)$.  The classical algorithm shows that
$M(n) = O(n^2)$, but various asymptotically faster algorithms exist.
The best result so far, 
due to Harvey, van der Hoeven and Lecerf~\cite{Harvey16}, is
\[M(n) = O\left(n\log n\, K^{\log^*\!\!n}\right)\]
with $K=8$.
Here the \emph{iterated logarithm} function $\log^*\!n$
is defined by
\vspace*{-10pt}
\[
\log^*\!n := \begin{cases}
		0 & \text{if}\; n \le 1;\\
		1 + \log^*(\log n) & \text{if}\; n > 1.
	      \end{cases}
\]
It is unbounded but grows \emph{extremely} slowly as $n \to \infty$,
e.g.~slower than 
\[
\log\log\cdots\log n \text{ {[}for any fixed number of logs]}.
\]
Indeed, if the multiplication algorithm
is implemented on a computer that fits in the observable universe
and has components no smaller than atomic nuclei, then we can
safely assume that $\log^*\!n$ is bounded by a moderate constant, and that
multiplication has bit-complexity $O(n\log n)$.

We follow \PAGM\ 
and assume that $M(n)$ is nondecreasing and
satisfies the weak regularity condition
\[2M(n) \le M(2n) \le 4M(n).\]

Newton's method can be used to compute reciprocals and square roots
with bit-complexity
\[
O\left(M(n) + M(\lceil n/2\rceil) + 
M\left(\left\lceil n/2^2\right\rceil\right) + \cdots\ + M(1)\right)
= O(M(n)).
\]
It can be shown that the bit-complexities
of squaring, multiplication, reciprocation, division, 
and root extraction are asymptotically the same,
up to small constant factors~\cite{rpb032}.
All these operations have bit-complexity of order $M(n)$.

To compute $\pi$ to $n$ digits (binary or decimal) by the arctan
formula~\eqref{eq:arctan_3},
or to compute $1/\pi$ by the Chudnovsky series~\eqref{eq:Chudnovsky_640320},
we have to sum of order $n$ terms.
Using \emph{divide and conquer}, also called
\emph{binary splitting}~\cite{rpb032,binary-splitting},%
\footnote{Somewhat more general, but based on the same idea,
is E.~Karatsuba's \emph{FEE method}~\cite{FEE}.}
this can be done with bit-complexity
\[O(M(n)\log^2 n).\]

Suppose we compute $\pi$ to $n$-digit accuracy using one of the 
quadratically convergent AGM algorithms. This requires 
$O(\log n)$ iterations, each of which has bit-complexity
$O(M(n))$. Thus, the overall bit-complexity is
\[O(M(n)\log n).\]
This is (theoretically) better than series summation methods, the best of
which have bit-complexity of order $M(n)\log^2 n$.

In practice, a method with
bit-complexity of order $M(n)\log^2\! n$ may be faster than a method
with bit-complexity of order $M(n)\log n$ unless $n$ is sufficiently large.
This is one reason for the recent popularity of the 
Chudnovsky series~\eqref{eq:Chudnovsky_640320} for
high-precision computation of $\pi$, even though the AGM-based methods are
theoretically (i.e.~asymptotically) more efficient.

In \S\ref{sec:history},
we mentioned Salamin's algorithm for computing $\log x$
for sufficiently large $x = 4/k$, i.e.~sufficiently small $k$,
using~\eqref{eq:logAGM}.
We can evaluate $K'(k)/\pi$ using the AGM with $(a_0,b_0) = (1, k)$, 
and hence approximate
$\log(4/k)$, assuming that $\pi$ is precomputed.  
To compute $\log x$ to $n$-bit accuracy requires about
$2\log_2(n)$ AGM iterations, or $3\log_2(n)$ iterations if we count
the computation of $\pi$.

If $x$ is not sufficiently large, we can use the identity
$\log(x) = \log(2^p x) - p\log 2$, where $p$ is a sufficiently large integer
(but not too large or excessive cancellation will occur).
This assumes that $\log 2$ is precomputed, and that the precision is
increased to compensate for cancellation.

To obtain a small relative error when $x$ is close to $1$, 
say $|x-1| < 2^{-n/\log n}$, it is better to use the Taylor series
for $\log(1+z)$, with $z = x-1$. The Taylor series computation can
be accelerated by ``splitting'',
see~\cite[\S4.4.3]{rpb226} and \cite{Smith89},

The $O(k^2)$ error term in the expression~\eqref{eq:logAGM}
can be written explicitly using hypergeometric
series, see~\cite[(1.3.10)]{PAGM}.
This gives one way of improving the accuracy of the approximation
$K'(k)$ to $\log(4/k)$.
We give an alternative using theta functions, for which the series
converge faster than the hypergeometric series (which converge
only linearly).
The result~\eqref{eq:Sasaki-Kanada} follows from several 
identities given in \S\ref{sec:prelim}. We collect them here for
convenience:
\begin{align*}
\log(1/q) &= \pi K'(k)/K(k),\\
k &= \theta_2^2(q)/\theta_3^2(q),\\
K(k) &= (\pi/2)\,\theta_3^2(q),\\
K'(k) &= (\pi/2)/\AGM(1,k).
\end{align*}
Putting these pieces together gives
the elegant result of Sasaki and Kanada~\cite{Sasaki82}
\begin{equation}
\log(1/q) = \frac{\pi}{\AGM(\theta_2^2(q), \theta_3^2(q))}\,\raisedot
					\label{eq:Sasaki-Kanada}
\end{equation}
In~\eqref{eq:Sasaki-Kanada}
we can replace $q$ by $q^4$ to avoid fractional powers of $q$ in the
expansion of $\theta_2(q)$, obtaining an exact formula for
all $q\in (0,1)$:
\begin{equation}
 \log (1/q) = 
  \frac{\pi/4}{\AGM(\theta_2^2(q^4),\theta_3^2(q^4))}\,\raisedot
					\label{eq:Sasaki-Kanada4}
\end{equation}

As in Salamin's algorithm, we have to ensure that 
$x:=1/q$ is sufficiently large,
but now there is a trade-off between increasing $x$ or taking more terms
in the series defining the theta functions.
For example, to attain $n$-bit accuracy, if $x > 2^{n/36}$,
we can use
$\theta_2(q^4) = 2(q + q^9 + q^{25} + O(q^{49}))$ and
$\theta_3(q^4) = 1 + 2(q^4 + q^{16} + O(q^{36}))$.
This saves about four AGM iterations, compared to Salamin's algorithm.
We remark that a result similar to 
\eqref{eq:Sasaki-Kanada} and \eqref{eq:Sasaki-Kanada4} is
given in (7.2.5) of {\PAGM}, but with an unfortunate typo
(a reciprocal is missing).

So far we have assumed that the initial values $a_0, b_0$ in the AGM
iteration are real and positive. There is no difficulty in extending the 
results that we have used to complex $a_0, b_0$, provided that they are
nonzero and $a_0/b_0$ is not both real and negative.
For simplicity, we assume that $a_0, b_0 \in \mathcal{H} =
\{z\,|\, \Re(z) > 0\}$.

In the AGM iteration (and in the definition of the geometric mean) there is
an ambiguity of sign.  We always choose the square root with positive real
part.  
Thus the iterates $a_n, b_n$ are uniquely defined and remain in
the right half-plane $\mathcal{H}$.

When using~\eqref{eq:Sasaki-Kanada4}, 
we may need to apply a rotation to $q$,
say by a multiple of $\pi/3$, 
in order to ensure that the starting values $(\theta_2^2(q^4),
\theta_3^2(q^4))$ for the AGM lie in$\mathcal{H}$.%
\footnote{Alternatively, we could
drop the simplifying assumption that $a_0, b_0 \in \mathcal{H}$
and use the ``right choice'' of Cox~\cite[pg.~284]{Cox84} to 
implement the AGM correctly.
}

\pagebreak[3]
For $z\in\mathbb{C}\backslash\{0\}$, 
$\;\log(z) = \log(|z|) + i\arg(z)$,
provided we use the principal values of the logarithms.
Thus, if $x\in\mathbb{R}$, we can use the complex AGM to compute
\[\arctan(x) = \Im(\log(1 + ix)).\]
$\arcsin(x), \arccos(x)$ etc can be computed via $\arctan$ using
elementary trigonometric identities
such as \[\arccos(x) = \arctan(\sqrt{1-x^2}/x).\]

Since we can compute $\log, \arctan, \arccos, \arcsin$, we can
compute $\exp, \tan, \cos, \sin$
(in suitably restricted domains) 
using Newton's method.
The trigonometric functions can also be computed via the complex
exponential.
Similarly for the hyperbolic functions $\cosh, \sinh, \tanh$ and their
inverse functions.

Although computing the elementary functions via the complex AGM is
conceptually straightforward, it introduces the overhead of complex
arithmetic. It is possible to avoid complex arithmetic by the use of
Landen transformations (which transform incomplete elliptic integrals).
See exercise 7.3.2 of \PAGM\ for an outline of this approach,
and~\cite{rpb034} for more details.

Whichever approach is used, the bit-complexity of computing
$ n$-bit approximations to any of the
elementary functions ($\log, \exp, \arctan, \sin, \cos, \tan$, etc)
in a given compact set $A \subset \mathbb{C}$ that excludes singularities
of the relevant function is $ O(M(n)\log n)$.
Here ``$ n$-bit approximation'' means with {absolute} 
error bounded by~$2^{-n}$.
We could require {relative} error bounded by 
$ 2^{-n}$, but the proof would depend on a Diophantine approximation
result such as Mahler's well-known result on approximation of $\pi$
by rationals~\cite{Mahler},
because of the difficulty of guaranteeing a small relative error
in the neighbourhood of a zero of the function.\footnote{Mahler's result
is sufficient for the usual elementary functions, whose zeros are rational
multiples of $\pi$, but it is not applicable to the problem of
computing combinations of these functions,
e.g.\ $\exp(\sin x) + \cos(\log x)$, with small relative accuracy.
In general, we do not know enough about the rational approximation
of the zeros of such functions to guarantee a small \emph{relative} 
\hbox{error}.
However, the result that we stated for computing elementary functions with a
small \emph{absolute} \hbox{error} extends to finite combinations of
elementary functions under the operations of addition, multiplication,
composition, etc. Indeed, the set of \emph{elementary functions} is usually
considered to include such finite combinations, although precise definitions
vary.  See, for example,
\S7.3 of {\PAGM}, Knopp~\cite[pp.~96--98]{Knopp}, Liouville~\cite{Liouville},
Ritt~\cite{Ritt}, and Watson~\cite[pg.\ 111]{Watson}.
}

Certain non-elementary functions can be computed with
bit-complexity\linebreak
\hbox{$O(M(n)\log n)$} via the AGM.  For example, we mention complete and
incomplete elliptic integrals, elliptic functions,
and the Jacobi theta functions
$\theta_2(q), \theta_3(q), \theta_4(q)$.
Functions that appear \emph{not} to be in this class of ``easily
computable'' functions include the Gamma function $\Gamma(z)$ and the
Riemann zeta function $\zeta(s)$.

Algebraic functions can be computed with bit-complexity $O(M(n))$,
see for example \cite[Thm.~6.4]{PAGM}. 
It is plausible to conjecture that no elementary transcendental functions
can be computed with bit-complexity $O(M(n))$ (or even
$o(M(n)\log n)$). However, as usual in complexity theory, nontrivial lower
bounds are difficult to prove and depend on the precise model of
computation.

\pagebreak[3]

\begin{samepage}
\subsection*{Acknowledgement}

I am grateful to Jon and Peter Borwein for becoming sufficiently interested
in this subject to write their book {\PAGM} only a few years after the
publication of \cite{rpb028,rpb032,rpb034,Salamin76}. Reading a copy of
{\PAGM} was my first introduction to the Borwein brothers, and was the start
of my realisation that we shared many common interests, despite living in
different hemispheres. Much later, after Jon and his family 
moved to Newcastle (NSW), I followed
him, bringing our common interests closer together, and benefitting
from frequent interaction with him.
\end{samepage}

Thanks are also due to David Bailey for his assistance, and to the Magma
group for their excellent software~\cite{Magma1}.

The author was supported in part by an Australian Research Council 
grant\linebreak
DP140101417. Jon Borwein was the Principal Investigator on this grant,
which was held by Borwein, Brent and Bailey.

\pagebreak[3]
\bibliographystyle{}
\bibliography{}

\end{document}